\theoremstyle{plain}
\newtheorem{lem}{Lemma}[section]
\newtheorem{thmx}{Theorem}
\newtheorem{prop}[lem]{Proposition}
\newtheorem{exa}[lem]{Example}
\theoremstyle{definition}
\newtheorem{defn}{Definition}[section]
\theoremstyle{remark}
\newtheorem{rem}{Remark}[section]
\numberwithin{equation}{section}
\newcommand{\N}{\mathbb{N}}
\newcommand{\R}{\mathbb{R}}
\newcommand{\La}{\Lambda}
\newcommand{\calL}{\mathcal{L}}
\newcommand{\calG}{\mathcal{G}}
\newcommand{\calI}{\mathcal{I}}
\newcommand{\w}{\omega}
\begin{document}

\title{Weak Mixing in switched systems\thanks{Accepted for publication in SCIENCE CHINA Mathematics}
}
\author{Yu Huang, Xingfu Zhong\\
\\
Department of Mathematics, Sun Yat-Sen University\\
Guangzhou, 510275 P. R. China\\
e-mail: stshyu@mail.sysu.edu.cn(Yu Huang)\\
zhongxingfu224634@163.com(Xingfu Zhong)}

\maketitle
\date
\begin{abstract}
Given a switched system, we introduce weakly mixing sets of type $1,2$ and Xiong chaotic sets of type $1,2$ with respect to a given set and show that they are equivalent respectively.
\\
Keywords: Weakly mixing; Chaotic set; Switched system
\end{abstract}

\section{Introduction}

A switched system consists of a family of subsystems and a rule that governs the switching among them.
More precisely, letting $X$ be a metric space not necessarily compact and ${\cal G}=\{f_0,f_1,\ldots\}$ a
family of countable continuous self-maps of $X$, we consider the discrete-time dynamical system in the form of
\begin{equation}\label{eq1.1}
   x_{n+1}=f_{\omega_n}(x_n),
\end{equation}
where $x_n\in X$, $\omega_n$ takes a value in the finite-symbolic
set ${\cal I}\triangleq \{0,1,\cdots\}$. If we denote the set (also called symbolic space) of all mappings $\mathbb{N}\rightarrow {\cal I}$ by
\[
  {\cal I}^{\mathbb{N}}=\{\omega:\ \mathbb{N}\rightarrow {\cal I} \},
\]
then switching can be classified into two situations: (i) arbitrary switching; that is, the
switching rule can be taken arbitrarily from ${\cal I}^{\mathbb{N}}$; (ii) switching is subject to
certain constraints; i.e., the switching rule is characterized by a subset of ${\cal I}^{\mathbb{N}}$.

Switched systems are found in many practical systems, see \cite{Costa-Fragoso-Marques} and \cite{Sun-Ge2005}. When the switchings are arbitrary,
one can take the switched system (\ref{eq1.1}) as a free semigroup action $G$ generated by ${\cal G}$, i.e., $G=\bigcup_{n\in \mathbb{N}}G^n$,
$G^n=\{f_{\omega_{n}}\cdots f_{\omega_{1}} \mid \omega_i\in {\cal I},\ i=1,\cdots, n\}$.
There are many works for dynamical systems under semigroup actions: topological entropy~\cite{Bis2004,Bufetov1999,Wang-Ma2015,Wang-Ma-Lin2016}; transitivity, mixing, and chaos~\cite{cairns2007topological,wang2014almost,wang2017weakly,zeng2017multi,wang2015m,Ma2017Some}; sensitivity~\cite{kontorovich2008note,
polo2010sensitive}; shadowing property~\cite{Bahabadi2015Shadowing,Ma2017Some}; specification property~\cite{Rodrigues2016Specification,Ma2017Some}; we refer the reader to the references therein for other related investigations.

Let $(X,T)$ be a topological dynamical system(TDS), where $X$ is a compact metric space and $T$ is a continuous self-map of $X$. Given two open sets $U,V$ of X, the \emph{hitting time set} of $U,V$ is defined as $N(U,V)=\{n\in\N:U\cap T^{-n}(V)\neq\emptyset\}$. The notion of weak mixing for a TDS is classical.
Recall that $(X,T)$ is \emph{weakly mixing} if for any four non-empty open subsets $U_1,U_2,V_1,V_2$ the set $N(U_1,V_1)\cap N(U_2,V_2)$ is not empty. Xiong and Yang in~\cite{Xiong1991Chaos} gave a characterization of weak mixing by chaotic sets. (After more than twenty years, Oprocha~\cite{Oprocha2013Coherent} showed a shorter proof to help us pass through it.) Recall that a subset $K$ of $X$ is said to be a \emph{Xiong chaotic set} with respect to a given increasing sequence $\{p_i\}$ if for any subset $F$ of $K$ and any continuous map $\phi:F\to X$ there exists an increasing subsequence $\{q_i\}$ of $\{p_i\}$ such that $\lim_{i\to\infty}T^{q_i}(x)=\phi(x)$ for every $x\in F$~\cite{Li2014Localization}. A subset $K\subset X$ is called \emph{Xiong chaotic} if it is Xiong chaotic with respect to $\{n\}$. Blanchard and Huang~\cite{Blanchard2017Entropy} introduced the notion of weakly mixing set and proved a TDS with positive entropy has many weakly mixing sets. Recall that a nondegenerate closed subset $A$ of $X$ is weakly mixing if for any $n\in\N$, any non-empty open subsets $U_1,U_2,\cdots,U_n$ and $V_1,V_2,\cdots,V_n$ of $X$ satisfying $U_i\cap A\neq\emptyset$ and $V_i\cap A\neq\emptyset$ for $i=1,2,\cdots,n$, there exists $m\in\N$ such that $A\cap U_i\cap T^{-m}(V_i)$ is non-empty set for $i=1,2,\cdots,n$. Recently, mixing sets of finite order and relative dynamical properties are widely studied, see~\cite{Li2014Localization,Oprocha2011On,oprocha2012sets,
oprocha2013weak,li2015recurrence,balibrea2012weak}. There are many other notions that are stronger than weak mixing, such as strong mixing, $\Delta$-transitivity, and $\Delta$-mixing. It is known that $\Delta$-transitivity implies weak mixing~\cite{moothathu2010diagonal}. We refer the reader to~\cite{chen2014multi,chen2015point,zeng2017multi} for more about $\Delta$-transitivity. Later, Huang et al. introduced the concept of $\Delta$-weakly mixing set~\cite{huang2017positive} and proved that if a TDS has positive topological entropy, then
there are many $\Delta$-weakly mixing sets.

On the other hand,
We find that the definitions of weakly mixing in~\cite{Ma2017Some} and~\cite{cairns2007topological,wang2017weakly,zeng2017multi} are slightly different, which are called weak mixing of type 1,2 (for short WM1 and WM2) in this paper.
\begin{itemize}
  \item[\textbf{WM1}] Under the setting of~\cite{Ma2017Some}, let $G_1=\{f_0,\ldots,f_{m-1}\}$. Let $G$ be the semigroup generated by $G_1$. Then semigroup action $G$ is said to be \emph{weakly mixing of type 1} if for any nonempty open subsets $U_1,U_2,V_1,V_2$ of $X$, there exists $n\in\N$ with $n>0$, $f_{n-1}\circ\cdots\circ f_0$ and $g_{n-1}\circ\cdots\circ g_0$ with $f_i,g_i\in G_1$, $i\in\{0,\ldots,n-1\}$ such that $f_{n-1}\circ\cdots\circ f_0(U_1)\cap V_1\neq\emptyset$ and $g_{n-1}\circ\cdots\circ g_0(U_2)\cap V_2\neq\emptyset$.
  \item[\textbf{WM2}] Under the view of~\cite{cairns2007topological,wang2017weakly,zeng2017multi}, we say that $(X,G)$ is \emph{weakly mixing of type 2} if for any nonempty open subsets $U_1,U_2,V_1,V_2$, there exists $s\in G$ such that $s(U_1)\cap V_1\neq\emptyset$ and $s(U_2)\cap V_2\neq\emptyset$.
\end{itemize}
It is clear the $WM2$ implies $WM1$.  Hui and Ma~\cite{Ma2017Some} gave some characterizations for WM1 under a constraint: semigroup has shadowing property, and
Zeng~\cite{zeng2017multi} gave some characterizations for WM2 under a condition: every element of the semigroup $G$ is surjective.

Inspired by the work of Blanchard and Huang~\cite{Blanchard2017Entropy}, we respectively introduce weak mixing sets and Xiong chaotic sets of type 1,2 with respect to a given set for switched systems.
Let $\La\subset\calI^\N$.
If the system (\ref{eq1.1}) is subjected to $\La$, then we denote the switched system by $(X,\calG|_\La)$. Our aim is to show that weakly mixing sets and Xiong chaotic sets of type 1,2 with respect to a given set for $(X,\calG|_\La)$ under the two different views respectively are equivalent (Theorem~\ref{thm:cha-of-mixing}).

\section{WM1 and WM2 for a switched system}\label{Sec:definition}

Let ${\cal I}=\{0,1,\cdots\}$.
For $n\in\mathbb{N}$ we denote by ${\cal I}^n$ the set of words of $\calI$ of length $n$, i.e.,
${\cal I}^n=\{u=(\omega_{0}\cdots\omega_{n-1})\mid \omega_i\in {\cal I},\ \ i=1,\cdots n-1\}$. Let ${\cal I}^*=\cup_{n\geq 1}{\cal I}^n$ be the set of all words of ${\cal I}$.

Suppose $\La$ is a subset of $\calI^\mathbb{N}$. The pre-language of $\La$ is defined by ${\cal L}(\La)=\{u\in {\calI}^*:\exists x\in\La, i\in\N~s.t.~ u=x_{0}\cdots x_{l(u)-1}\}$, where $l(u)$ denotes the length of $u$. We denote by
${\cal L}^n(\La)={\cal L}(\La)\cap {\cal I}^n$ the pre-language of words of length $n$.

Let $X$ be a metric space with metric $d$, which is not necessarily compact. Consider a family of countable continuous self-maps ${\cal G}=\{f_i\}_{i\geq0}$ of $X$.
Given a subset $\La$ of $\calI^\N$, we consider the switched system (\ref{eq1.1}) under the switching sequences subjected to $\La$. This system can be characterized
by the action $G=\cup_n G_n$ on $X$ where $G_n=\{f_{\omega}=f_{\omega_{n-1}}\circ\cdots\circ f_{\omega_0}\mid \omega=(\omega_{n-1}\cdots\omega_0)\in {\cal L}^n(\La)\}$.
We remark that the action $G$ is not a free semigroup acting on $X$ when $\La$ is a proper subset of ${\cal I}^\mathbb{N}$.

Now, we introduce the notions of WM1 and WM2 with respect to a given set for switched systems and give the main theorem in this paper.

For every two non-empty open subsets $U,V$ of $X$, the \emph{hitting time set of type $1$} and \emph{hitting time set of type 2} of $U$ and $V$ are respectively defined as
\[N_1(U,V)=\{n\in\N:\exists\w\in\calL^n(\La)~\text{s.t.}~f_\w(U)\cap V\neq\emptyset\},\]
\[N_2(U,V)=\{\w\in\calL(\La): f_\w(U)\cap V\neq\emptyset\}.\]
If no misunderstanding is possible, we omit the subscripts.
\begin{defn}\label{defn:MD}
Let $(X,\calG|_\La)$ be a switched system. Let $K$ and $Q$ be two subsets of $X$.
For $n\in\N$ and $n\geq2$,
if for any non-empty open subsets $U_1,U_2,\ldots, U_n$, $V_1,V_2,\ldots,V_n$ of $X$
with $U_i\cap K\neq\emptyset$ and $V_i\cap Q\neq\emptyset$ for $i=1,2,\cdots,n$,
there exists $S=\{q_i\}$ in $\N$ such that
\[S\subset N_1(K\cap U_i,V_i),~for~i=1,2,\ldots,n,\]
then we say that $K$ is \emph{weakly mixing of type $1$ with respect to $Q$ of order $n$}; if
there exists an infinite sequence $S=\{s_i\}$ in $\calL(\La)$ with $|l(s_i)|$ increasing such that
\[S\subset N_2(K\cap U_i,V_i),~for~i=1,2,\ldots,n,\]
then we say that $K$ is \emph{weakly mixing of type $2$ with respect to $Q$ of order $n$};
The set $K$ is said to be \emph{weakly mixing of type $1$, $2$ with respect to $Q$} respectively if $K$ respectively is weakly mixing of type $1$, $2$ with respect to $Q$ of order $n$ for every $n\geq2$.
\end{defn}

\begin{rem}
Suppose $X$ is compact.
(\romannumeral1) If $X=K=Q$ and $|\calG|=1$, then the definitions of WM1 and WM2 coincide with the definition of weak mixing in classical dynamical systems.

(\romannumeral2) If $K=Q\subset X$ and $|\calG|=1$, then these definitions and the concept of weak mixing sets in dynamical system(see~\cite{Blanchard2017Entropy}) coincide.

(\romannumeral3) If $X=K=Q$ and $|\La|$, then this two definitions and weakly mixing of all orders in~\cite{balibrea2012weak} coincide.
\end{rem}

\begin{prop}
Let $\La=\calI^\N$. If $f_0,f_1,\ldots$ commute with each other, then the following statements are equivalent:
\begin{itemize}
  \item[(\romannumeral1)] $X$ is weakly mixing of type $2$ with respect to itself;
  \item[(\romannumeral2)] $X$ is weakly mixing of type $2$ with respect to itself of order $2$;
\end{itemize}
\end{prop}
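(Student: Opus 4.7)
The implication (i) $\Rightarrow$ (ii) is immediate from the definitions, since weak mixing of type $2$ with respect to $X$ means weak mixing of type $2$ of order $n$ for every $n\geq 2$, and $n=2$ is a special case.

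For (ii) $\Rightarrow$ (i) I would proceed by induction on $n$, the base case $n=2$ being the hypothesis. The plan is to adapt the classical Furstenberg trick for passing from order $n$ to order $n+1$, where the role of the iterates $T^k$ and $T^m$ of a single map is played by two semigroup elements $f_k$ and $f_m$. Commutativity of the generators $f_0,f_1,\ldots$ is exactly what makes this substitution work. Explicitly: given non-empty open sets $U_1,\dots,U_{n+1},V_1,\dots,V_{n+1}$, first apply the order-$2$ hypothesis to the pairs $(U_n,U_{n+1})$ and $(V_n,V_{n+1})$ to produce some $k\in\calL(\La)$ with $f_k(U_n)\cap U_{n+1}\neq\emptyset$ and $f_k(V_n)\cap V_{n+1}\neq\emptyset$. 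Set
\[
W=U_n\cap f_k^{-1}(U_{n+1}),\qquad W'=V_n\cap f_k^{-1}(V_{n+1}),
\]
which are non-empty open.

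Next, apply the inductive hypothesis (weak mixing of order $n$ with respect to $X$) to the $n$ pairs $(U_1,V_1),\ldots,(U_{n-1},V_{n-1}),(W,W')$: this yields an infinite sequence $\{m_j\}\subset\calL(\La)$ with $l(m_j)\to\infty$ such that $f_{m_j}(U_i)\cap V_i\neq\emptyset$ for $i=1,\ldots,n-1$ and $f_{m_j}(W)\cap W'\neq\emptyset$ for every $j$. Fix such an $m=m_j$. Since $W\subset U_n$ and $W'\subset V_n$, the last condition already gives $f_m(U_n)\cap V_n\neq\emptyset$. For the $(n+1)$-th pair, choose $x\in W$ with $f_m(x)\in W'$. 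Then $f_k(x)\in U_{n+1}$ by the definition of $W$, while commutativity of all generators (hence of $f_k$ and $f_m$ as elements of the semigroup) yields
\[
f_m(f_k(x))=f_k(f_m(x))\in f_k(W')\subset V_{n+1},
\]
so $f_k(x)\in U_{n+1}\cap f_m^{-1}(V_{n+1})$ and therefore $f_m(U_{n+1})\cap V_{n+1}\neq\emptyset$. Running $j$ over $\N$ produces an infinite sequence in $\calL(\La)$ with increasing length witnessing weak mixing of order $n+1$.

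The only non-routine ingredient is the commutativity exchange $f_m\circ f_k=f_k\circ f_m$, which I would justify once at the start by noting that since $\La=\calI^\N$ we have $\calL(\La)=\calI^*$, so concatenation of words stays in the pre-language, and since the generators pairwise commute, every pair of semigroup elements commutes. I expect this is the only place the hypotheses $\La=\calI^\N$ and pairwise commuting generators are genuinely used, so the main obstacle is really just bookkeeping—ensuring the right ``target'' sets ($U_{n+1}$ vs. $V_{n+1}$) are chosen in the first order-$2$ application so that a single $f_m$ simultaneously hits $V_n$ from $U_n$ and $V_{n+1}$ from $U_{n+1}$.
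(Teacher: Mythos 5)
Your proof is correct and rests on the same key idea as the paper's: the commutativity-based Furstenberg trick of replacing a pair of targets by $W=U_n\cap f_k^{-1}(U_{n+1})$, $W'=V_n\cap f_k^{-1}(V_{n+1})$ so that a single semigroup element hitting $W'$ from $W$ automatically handles both original pairs, with $f_m\circ f_k=f_k\circ f_m$ doing the work. The only organizational difference is that you inherit the infinite increasing-length sequence directly from the inductive hypothesis at order $n$, whereas the paper collapses all $n$ pairs to a single pair $(U_{n+1},V_{n+1})$ and then argues separately (by appending words) that $N(U_{n+1},V_{n+1})$ is infinite; your bookkeeping is slightly cleaner on that point but the route is essentially identical.
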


\begin{proof}
It is clear that (\romannumeral1) implies (\romannumeral2). Given nonempty subsets $U_1,U_2,V_1,V_2$, pick $s\in\calL(\calI^\N)$ from the set $N(U_1,U_2)\cap N(V_1,V_2)$.
Put
\[U=U_1\cap f_s^{-1}(U_2),~~V=V_1\cap f_s^{-1}(V_2).\]
Then for any $w\in N(U,V)$, we have $w\in N(U_1,V_1)$ and
\begin{align*}
 \emptyset\neq f_s(f_w(U)\cap V)\subset f_w(f_s(U))\cap f_s(V)\subset f_w(U_2)\cap V_2.
\end{align*}
Therefore, $N(U,V)\subset N(U_1,V_1)\cap N(U_2,V_2)$. It follows that for any $n\geq2$ and $U_1,\ldots,U_n,V_1,\ldots,V_n$, there exist nonempty subsets $U_{n+1},V_{n+1}$ such that
\[N(U_{n+1},V_{n+1})\subset\cap_{i=1}^nN(U_i,V_i).\]
It remains to show that $N(U_{n+1},V_{n+1})$ is infinite. Choose $s\in N(U_{n+1},V_{n+1})$. Then
$f_s^{-1}(V_{n+1})\cap U_{n+1}\neq\emptyset$. Hence $N(U_{n+1},f_s^{-1}(V_{n+1})\cap U_{n+1})\neq\emptyset$. Pick $w\in N(U_{n+1},f_s^{-1}(V_{n+1})\cap U_{n+1})$ and $u\in U_{n+1}$ such that $f_w(u)\in f_s^{-1}(V_{n+1}$. Then $f_{ws}(u)\in V_{n+1}$. It follows that $ws\in N(U_{n+1},V_{n+1})$. Repeating the process completes the proof.
\end{proof}

\begin{defn}
Let $(X,\calG|_\La)$ be a switched system. Suppose $C$ and $Q$ are two subsets of $X$.
We say that $C$ is \emph{Xiong chaotic of type 1} with respect to $Q$ if for any subset $E$ of $C$ and any continuous map $g:E\to Q$ there exists an increasing unbounded sequence $\{q_i\}$ in $\N$ such that for every $x\in E$ there is a sequence $\{w_{q_i}^x\}$ with $w_{q_i}^x\in\calL^{q_i}(\La)$ such that $\lim_{i\to\infty}\phi(q_i,x,\w_{q_i}^x)=g(x)$;  \emph{Xiong chaotic of type 2} with respect to $Q$  if for any subset $E$ of $C$ and any continuous map $g:E\to Q$, there exists an increasing unbounded sequence $\{q_i\}$ of positive integers and $\w_{q_i}\in\calL^{q_i}(\La)$ such that $\lim_{i\to\infty}f_{\w_{q_i}}(x)=g(x)$ for every $x\in E$.
\end{defn}

Following the idea in~\cite{Li1975Period},
we call a pair $(x,y) \in X \times X$ \emph{scrambled of type 1} if there exists two infinite sequences $\{\w_i\}$ and $\{s_i\}$ with $\w_i, s_i\in\calL^i(\La)$ such that
\[\liminf_{i\to\infty}d(f_{\w_i}(x),f_{s_i}(y))=0~ ,~\limsup_{i\to\infty}d(f_{\w_i}(x),f_{s_i}(y))>0);\]
\emph{scrambled of type 2}
if there exists an infinite sequence $\{\w_i\}$ with $\w_i\in\calL^i(\La)$
\[\liminf_{i\to\infty}d(f_{\w_i}(x),f_{\w_i}(y))=0~ ,~\limsup_{i\to\infty}d(f_{\w_i}(x),f_{\w_i}(y))>0.\]
A subset $C$ of $X$ is called \emph{scrambled set of type $1,2$} respectively if any two distinct points $x, y\in C$ respectively form a scrambled pair of type $1,2$. The switched system $(X,\calG|_\La)$ is called \emph{Li-Yorke chaotic of type $1,2$} respectively if there exists an uncountable scrambled set of type $1,2$ respectively.

We can deduce the following result directly from the definition of Xiong chaotic set.
\begin{prop}
Let $(X,\calG|_\La)$ be a switched system, $Q$ a closed subset of $X$. Assume that $Q$ is nondegenerate.
If $C$ is a Xiong chaotic set of type $1,2$ respectively with respect to $Q$, then $C$ is a scrambled set of type $1,2$ respectively.
\end{prop}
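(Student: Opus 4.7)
The plan is to use the Xiong chaotic property with two carefully chosen continuous maps on a two-point subset of $C$: one map forces the orbits of $x$ and $y$ to come arbitrarily close, the other forces them to separate. Interleaving the word sequences produced by the two applications then delivers the scrambled pair.

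More concretely, since $Q$ is nondegenerate I would first pick $p, q \in Q$ with $\delta := d(p,q) > 0$. Given distinct $x, y \in C$, I set $E = \{x,y\}$; as a finite subset of the metric space $X$ it carries the discrete subspace topology, so \emph{every} map $E \to Q$ is continuous. I then consider the two maps $g_1, g_2 : E \to Q$ defined by $g_1 \equiv p$ and $g_2(x) = p$, $g_2(y) = q$. Applying the Xiong chaos hypothesis (type 2, say) to $g_1$ produces an increasing unbounded $\{n_k\} \subset \N$ and $\sigma_k \in \calL^{n_k}(\La)$ with $f_{\sigma_k}(x)\to p$ and $f_{\sigma_k}(y) \to p$, hence $d(f_{\sigma_k}(x), f_{\sigma_k}(y)) \to 0$. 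Applying it to $g_2$ produces $\{m_k\}$ and $\tau_k \in \calL^{m_k}(\La)$ with $f_{\tau_k}(x) \to p$, $f_{\tau_k}(y) \to q$, so $d(f_{\tau_k}(x), f_{\tau_k}(y)) \to \delta$.

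The remaining step is to assemble these into the single sequence required by the scrambled-pair definition. For type 2 I would build $\{\omega_i\}_{i\geq 1}$ with $\omega_i \in \calL^i(\La)$ by setting $\omega_{n_k} := \sigma_k$ and $\omega_{m_k} := \tau_k$ (discarding overlaps, which is harmless since both sets are infinite), and choosing $\omega_i$ arbitrarily from $\calL^i(\La)$ at the remaining indices. The $\sigma_k$-subsequence forces $\liminf_i d(f_{\omega_i}(x), f_{\omega_i}(y)) = 0$ and the $\tau_k$-subsequence forces $\limsup_i d(f_{\omega_i}(x), f_{\omega_i}(y)) \geq \delta > 0$, which is exactly the scrambled-pair-of-type-2 condition. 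The type 1 argument is entirely parallel: the Xiong definition of type 1 allows $x$ and $y$ to use distinct words $\omega^x_{n_k}, \omega^y_{n_k}$ of the \emph{same} length, so after the same interleaving one extracts two sequences $\{\omega_i\}$ and $\{s_i\}$ (with $\omega_i, s_i \in \calL^i(\La)$) witnessing scrambling of type 1.

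The only mildly delicate issue is the matching-length requirement $\omega_i \in \calL^i(\La)$ for \emph{every} $i$, which forces the arbitrary-fill step above. This is legitimate because $\calL^i(\La) \neq \emptyset$ for all $i$ whenever $\La \neq \emptyset$, and the existence of any Xiong chaotic structure presupposes non-empty $\La$. Beyond this bookkeeping, the argument is a direct two-map application of the defining property together with the standard merge of two subsequences, so I do not anticipate any further obstacle.
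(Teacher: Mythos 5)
Your argument is correct, and it is exactly the ``direct deduction from the definition'' that the paper invokes for this proposition without writing out any proof: two maps $g_1\equiv p$ and $g_2$ separating $x,y$ onto $p\neq q$ in $Q$, followed by merging the two resulting word sequences. The only point to tighten is the merge itself: rather than ``discarding overlaps'' (which could in principle exhaust one index set if $\{n_k\}\subseteq\{m_j\}$), choose indices from $\{n_k\}$ and $\{m_j\}$ alternately and strictly increasing, so that both surviving index sets are infinite; this is routine bookkeeping.
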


\begin{thmx}\label{thm:cha-of-mixing}
Let $(X,\calG|_\La)$ be a switched system and $Q$ a nondegenerate subset of $X$. If $K$ is a perfect compact subset of $X$ then $K$ respectively is weakly mixing of type $1,2$ with respect to $Q$ if and only if there exists $F_\sigma$ set which is Xiong chaotic of type $1,2$ with respect to $Q$ and dense in $K$ respectively.
\end{thmx}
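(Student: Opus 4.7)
The plan is to handle both implications and both types in parallel, since they differ only in whether a single word in $\calL(\La)$ is used for all coordinates (type 2) or separate words are allowed per coordinate (type 1). I would follow the Xiong--Yang strategy (streamlined by Oprocha) adapted to the possibly non-free action $\calG|_\La$.

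For the direction Xiong chaotic $\Rightarrow$ weakly mixing: given a dense $F_\sigma$ Xiong chaotic set $C\subset K$ and nonempty open sets $U_1,\ldots,U_n,V_1,\ldots,V_n$ with $U_j\cap K\neq\emptyset$ and $V_j\cap Q\neq\emptyset$, density of $C$ in $K$ lets us pick $x_j\in U_j\cap C$, and we choose $y_j\in V_j\cap Q$. The assignment $g\colon\{x_1,\ldots,x_n\}\to Q$, $x_j\mapsto y_j$, is trivially continuous, so the Xiong chaotic hypothesis produces words (one per point in type 1, a common one in type 2) whose action sends each $x_j$ into $V_j$ for all sufficiently large indices, which is precisely the required weak-mixing witness.

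For the converse, the first step is an approximation lemma: by iterating the definition of weak mixing of all finite orders, for any finite tuple of points $x_1,\ldots,x_n\in K$ and targets $y_1,\ldots,y_n\in Q$ and any $\varepsilon>0$, there exist admissible words in $\calL(\La)$ of arbitrarily large length whose action approximates the target configuration within $\varepsilon$; for type 2 a single word handles all coordinates, for type 1 separate words suffice. The second step is a Mycielski-type Cantor-scheme construction inside the perfect compact set $K$. I would enumerate all finite configurations of binary-string indexed centers together with target tuples drawn from a fixed countable dense subset $Q_0\subset Q$, and at stage $n$ shrink $2^n$ disjoint closed balls in $K$ while selecting a word (or words) provided by the approximation lemma that realize the current target configuration up to error $1/n$. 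Taking intersections along infinite binary sequences yields Cantor sets $C_n$, and $C=\bigcup_n C_n$ is a dense $F_\sigma$ subset of $K$ with the property that for every finite $E\subset C$ and every target assignment $E\to Q_0$ there exist witnessing words of length tending to infinity; the diagonal bookkeeping ensures each finite configuration is handled infinitely often.

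Finally, to pass from finitely-valued targets in $Q_0$ to an arbitrary continuous $g\colon E\to Q$ on an arbitrary subset $E\subset C$, I would invoke uniform continuity on compact subsets of $\overline{E}$ together with the density of $Q_0$ in $Q$: approximate $g$ by finitely-many-valued maps with values in $Q_0$ and extract a diagonal subsequence of witnessing words. The main obstacle is the simultaneity required in the type-2 case: one must extract a \emph{single} sequence $\{w_{q_i}\}\subset\calL(\La)$ that works for every $x\in E$ at once. This forces the Cantor-scheme bookkeeping to invoke weak mixing of order $n$ for every $n$ (not merely transitivity) and to control simultaneously the metric approximation, the common length of admissible words, and the intersection with the constructed Cantor scheme; the type-1 case, allowing per-point words, is technically easier and essentially reduces to an iterated transitivity argument.
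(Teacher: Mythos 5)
Your sufficiency direction is exactly the paper's Lemma~\ref{lem:sufficiency-of-mixing}: pick $x_j\in U_j\cap C$, $y_j\in V_j\cap Q$, apply the Xiong chaotic property to the finite map $x_j\mapsto y_j$. No issues there. For the necessity direction your overall strategy (Mycielski-type Cantor construction driven by repeated applications of weak mixing of all orders) is the same as the paper's, but the packaging differs: the paper does not build the Cantor scheme by hand. It introduces the hereditary family $\mathfrak{X}(\varepsilon,Q)$ of compact sets that are ``$\varepsilon$-spread in $Q$'', proves in Lemma~\ref{lem:SRS} that $\mathfrak{X}(Q)\cap 2^K$ is residual in the hyperspace $2^K$, invokes the Kuratowski--Mycielski theorem (Lemma~\ref{lem:COKM}) to extract an increasing sequence of Cantor sets $C_1\subset C_2\subset\cdots$ in $\mathfrak{X}(Q)$ with dense union, and then checks in Lemma~\ref{lem:SOSC} that such a union is Xiong chaotic. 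Your direct construction could in principle be made to work, but the hyperspace route lets the paper quote a black box instead of managing the binary-tree bookkeeping.

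The genuine gap in your proposal is precisely at the point you flag but do not resolve: passing from the countably many $Q_0$-valued target configurations baked into your construction to an arbitrary continuous $g\colon E\to Q$ on an arbitrary (possibly infinite, non-closed) $E\subset C$, with a \emph{single} sequence of words in the type-2 case. The paper's device that dispenses with this is the quantifier order in the definition of $\varepsilon$-spread: a single finite set of centers $z_1,\ldots,z_n$ and a single $\delta$ are chosen so that for \emph{every} map $h\colon\{z_1,\ldots,z_n\}\to Q$ there is a word $\w$ with $f_{\w}(B(z_i,\delta))\subset B(h(z_i),\varepsilon)$. This universality over all targets is obtained in Lemma~\ref{lem:SRS} by covering the compact $Q$ with $m$ balls of radius $\frac{1}{2p}$ and running the weak mixing hypothesis through all $m^n$ assignments of the $n$ centers to those balls, nesting the open sets $W_i^1\supset W_i^2\supset\cdots\supset W_i^{m^n}$. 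Because the centers and radius are fixed \emph{before} $g$ is given, Lemma~\ref{lem:SOSC} can, for each $i$, simply evaluate $g$ near the centers of $C_i$ and read off the witnessing word; no subsequence extraction or enumeration of target tuples is needed, and the same word automatically works for every $x\in A$ lying in the same ball. In your scheme the words are committed during the construction for prescribed $Q_0$-tuples, so for an infinite $E$ whose points occupy unboundedly many cells, the diagonal extraction of a single word sequence achieving pointwise convergence for all $x\in E$ at once is exactly the step that needs (and in your sketch lacks) an argument; reversing the quantifiers as the paper does is the cleanest way to supply it.
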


Since the proofs for the two cases are similar, we only give the proof of type $2$.

\begin{proof}
The necessity part comes from lemmas~\ref{lem:COKM};~\ref{lem:SRS};
and ~\ref{lem:SOSC}, and the sufficiency part comes from the following lemma~\ref{lem:sufficiency-of-mixing}.
\end{proof}

\begin{lem}\label{lem:sufficiency-of-mixing}
Under the conditions of Theorem~\ref{thm:cha-of-mixing}. If $K$ is Xiong chaotic of type $1,2$  with respect to $Q$ respectively, then it is weakly mixing of type $1,2$ with respect to $Q$ respectively.
\end{lem}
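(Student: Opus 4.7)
The plan is to apply the Xiong chaos property directly to a finite set of witnesses, one drawn from each $U_i$, forcing them to approach target points chosen from each $V_i \cap Q$. The key observation is that continuity places no constraint on a function defined on a finite set, so Xiong chaos lets me freely prescribe where to send the source points.

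Fix $n \geq 2$ and non-empty open sets $U_1, \ldots, U_n, V_1, \ldots, V_n$ with $U_i \cap K \neq \emptyset$ and $V_i \cap Q \neq \emptyset$. First I would pick $y_i \in V_i \cap Q$ for each $i$. Because $K$ is perfect, every non-empty relatively open subset of $K$ is infinite, so I can inductively choose distinct points $x_1, \ldots, x_n$ with $x_i \in U_i \cap K$. Let $E = \{x_1, \ldots, x_n\}$ and define $g \colon E \to Q$ by $g(x_i) = y_i$; since $E$ is finite, $g$ is trivially continuous.

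For the type 2 case, I would invoke Xiong chaos of type $2$ of $K$ with respect to $Q$, applied to this $E$ and $g$. This yields an increasing unbounded integer sequence $\{q_i\}$ and words $\w_{q_i} \in \calL^{q_i}(\La)$ with $f_{\w_{q_i}}(x_j) \to y_j$ as $i \to \infty$ for every $j \in \{1, \ldots, n\}$. Since each $V_j$ is open and contains $y_j$, there is some $i_0$ beyond which $f_{\w_{q_i}}(x_j) \in V_j$ for all $j$ simultaneously. Combined with $x_j \in K \cap U_j$, this gives $\w_{q_i} \in N_2(K \cap U_j, V_j)$ for every $j$ and every $i \geq i_0$. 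The tail $S = \{\w_{q_i} : i \geq i_0\}$ is an infinite sequence in $\calL(\La)$ of strictly increasing word length contained in $\bigcap_{j=1}^{n} N_2(K \cap U_j, V_j)$, which is exactly weakly mixing of type $2$ with respect to $Q$ of order $n$. Letting $n$ vary gives the full conclusion.

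The type 1 case is handled identically: Xiong chaos of type $1$ supplies, along a common integer sequence $\{q_i\}$, separate word sequences $w_{q_i}^{x_j} \in \calL^{q_i}(\La)$ per point with $f_{w_{q_i}^{x_j}}(x_j) \to y_j$. For $i$ large, $q_i \in N_1(K \cap U_j, V_j)$ for every $j$, giving the required set $S = \{q_i : i \geq i_0\} \subset \N$. I do not anticipate any genuine obstacle; the argument is essentially a single application of the definition after the right choice of finite data $(E,g)$. The only delicate point is ensuring the $x_j$'s are distinct so that $g$ is well-defined as a function, which is where perfectness of $K$ is used.
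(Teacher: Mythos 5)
Your proof is correct and follows essentially the same route as the paper's: apply the Xiong chaos property to the finite set $E=\{x_1,\ldots,x_n\}$ with the (automatically continuous) map $g(x_i)=y_i$ and use openness of the $V_i$ to land in them eventually. In fact you are somewhat more careful than the paper, which neither addresses the distinctness of the $x_i$ (needed for $g$ to be well defined) nor explicitly extracts the infinite tail of words required by Definition 2.1, whereas you do both.
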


\begin{proof}
Fix $n\in\N$. Suppose $U_1,U_2,\ldots,U_n,V_1,V_2,\ldots,V_n$ are $2n$ non-empty subsets of $X$ and $U_i\cap K\neq\emptyset$ and $V_i\cap Q\neq\emptyset$, $i=1,\ldots,n$. Choose $x_i\in U_i\cap K$, $y_i\in V_i\cap Q$, and define a map $g: \{x_1,...,x_n\} \rightarrow Q$ by $g(x_i)=y_i$. Then, by the definition of Xiong chaotic set of type $2$ with respect to a given set, there exist $k,\varepsilon$ and $\w\in\calL^k(\La)$ such that $f_{\w_k}((x_i)\in B(y_i,\varepsilon)\subset V_i$. So $\cap_{i=1}^{n}N(K\cap U_i,V_i)\neq\emptyset$. This completes the proof.
\end{proof}

\section{Necessity of Theorem~\ref{thm:cha-of-mixing}}
We will use the methods developed in~\cite{Oprocha2013Coherent} to obtain the necessity of Theorem~\ref{thm:cha-of-mixing}. First, we introduce some tools we need in hyperspace.

Let $(X,d)$ be a compact metric space and $A$ a non-empty subset of $X$.
We say $A$ is \emph{totally disconnected} if its only connected subsets are singletons; \emph{perfect} if it is closed and has no
isolated point; \emph{Cantor's} if it is a compact, perfect, and totally disconnected set; \emph{residual} if it contains a dense $G_\delta$ set. We write $B(x,\varepsilon)=\{y\in X,~d(x,y)<\varepsilon\}$, $d(x,A)=\inf\{d(x,a),a\in A\}$, $B(A,\varepsilon)=\{x\in X:d(x,A)<\varepsilon\}$. Denote by $\overline{B}(A,\varepsilon)$ the closure of $B(A,\varepsilon)$.

Recall that the \emph{hyperspace} $2^X$ of $X$ is the collection of all non-empty closed subsets of $X$ endowed with the \emph{Hausdorff metric} $d_H$ defined by
\[d_H(A,B)=inf\{\varepsilon>0:\overline{B}(A,\varepsilon)\supseteq B~ {and}~\overline{B}(B,\varepsilon)\supseteq A\}.\]
The following family
\[\{\langle U_1,\ldots,U_n\rangle: U_1,\ldots,U_n ~are~ non\text{-}empty~ open~ subsets~ of~ X,~ n\in \N\}\]
forms a basis for a topology of $2^X$ called the \emph{Vietoris topology}, where
\[\langle U_1,\ldots,U_n\rangle:=\{A\in 2^X:A\subset\cup_{i=1}^n U_i,~ {and}~ U_i\cap A\neq\emptyset, ~ {for}~ i=1,\ldots,n\}\]
It is well known that the Hausdorff metric $d_H$ is compatible with the Vietoris
topology for $2^X$ (for detalis see \cite{Nadler1978Hyperspaces}).
A subset $Q$ of $2^X$ is called \emph{hereditary} if $2^A \subset Q$ for every set $A\in Q$.

The following result is a key to the proof for the necessity, which is a consequence of Kuratowski-Mycielski Theorem (see Theorem 5.10 of ~\cite{Akin2004Lectures}).
\begin{lem}\label{lem:COKM}
  Suppose that $X$ is a perfect compact space. If a hereditary subset $Q$ of $2^X$ is residual then there exists a countable Cantor sets $C_1\subset C_2 \subset\cdots$ of $X$ such that $C_i \in Q $ for every $i \geq 1$ and $C =\cup_{i=1}^\infty C_i $ is dense in $X$.
\end{lem}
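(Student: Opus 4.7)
The plan is to exploit Baire category in the hyperspace $(2^X, d_H)$, which is itself a compact metric (hence Polish, hence Baire) space since $X$ is compact metric. A classical fact is that when $X$ is perfect, the collection $\mathcal{C}(X)$ of Cantor subsets of $X$ is a dense $G_\delta$ subset of $2^X$; consequently $Q \cap \mathcal{C}(X)$ is residual in $2^X$. This residual set will serve as the reservoir of candidate Cantor sets, and Baire category provides the flexibility to impose countably many further conditions without losing non-emptiness.

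Next I would fix a countable basis $\{U_n\}_{n \geq 1}$ of non-empty open subsets of $X$ and construct $C_1 \subset C_2 \subset \cdots$ by induction on $i$, arranging that each $C_i$ is a Cantor set in $Q$ and that $C_i \cap U_j \neq \emptyset$ for all $j \leq i$. The meeting condition is what forces $\bigcup_i C_i$ to be dense in $X$. For the base case, the set $\langle U_1 \rangle = \{A \in 2^X : A \subset U_1\}$ is open and non-empty in $2^X$, so $Q \cap \mathcal{C}(X) \cap \langle U_1 \rangle$ is residual in the Baire space $\langle U_1 \rangle$, and any member of it can be taken as $C_1$.

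The main obstacle is the inductive extension step: given a Cantor set $C_i \in Q$, one must produce a Cantor set $C_{i+1} \supset C_i$ with $C_{i+1} \in Q$ and $C_{i+1} \cap U_{i+1} \neq \emptyset$. The natural attempt is $C_{i+1} = C_i \cup D$ for some Cantor set $D \subset U_{i+1}$ disjoint from $C_i$, but the constraint $C_i \cup D \in Q$ amounts to a continuous preimage of $Q$ under $D \mapsto C_i \cup D$, and continuous preimages do not in general preserve residuality. This is precisely the step at which the Kuratowski-Mycielski Theorem (Akin's Theorem 5.10) is invoked: one constructs $D$ through a Cantor scheme of shrinking open sets inside $U_{i+1}$, at each level verifying countably many conditions that force the limiting Cantor set, taken together with $C_i$, to lie in $Q$. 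Hereditariness of $Q$ is essential here, since it converts the single global constraint $C_i \cup D \in Q$ into local constraints on finite approximants that are robust under passing to closed subsets, and it is this robustness that lets a standard Baire-category diagonalisation succeed. With the scheme complete, $C_{i+1} := C_i \cup D$ is a Cantor set in $Q$ that meets $U_{i+1}$, closing the induction and yielding the desired dense union $C = \bigcup_i C_i$.
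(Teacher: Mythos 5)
The paper itself gives no argument for this lemma: it is recorded as a direct consequence of the Kuratowski--Mycielski Theorem (Theorem 5.10 of Akin's lectures), whose hyperspace form for hereditary residual subsets of $2^X$ is essentially the statement verbatim. Your proposal tries to actually prove it, and your framework (Baire category in $2^X$, Cantor sets forming a dense $G_\delta$ when $X$ is perfect, induction over a countable basis) is sensible, but the inductive extension step is a genuine gap --- indeed it is false as stated. The claim that any Cantor set $C_i\in Q$ can be enlarged to a Cantor set $C_{i+1}\in Q$ meeting $U_{i+1}$ does not follow from $Q$ being hereditary and residual, and Kuratowski--Mycielski supplies no such extension lemma. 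Concretely, take a Cantor set $C\subsetneq X$, a non-empty open $U$ with $\overline{U}\cap C=\emptyset$, a point $u_0\in U$, and set $G=\{A\in 2^X: C\not\subset A\}\cup\{C\cup\{u_0\}\}$; since $\{A: C\subset A\}$ is closed and nowhere dense, $G$ is a dense $G_\delta$, and $Q=\{B\in 2^X: B\subset A\ \text{for some}\ A\in G\}$ is hereditary and residual with $C\in Q$. Any $A\in G$ containing $C$ must equal $C\cup\{u_0\}$, so the only non-empty closed $D\subset U$ with $C\cup D\in Q$ is the singleton $\{u_0\}$, and no Cantor set in $Q$ contains $C$ and meets $U$. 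So the set of admissible $D$ --- precisely the preimage of $Q$ under $D\mapsto C_i\cup D$ --- can be a single point; your own observation that continuous preimages need not preserve residuality is not an obstacle to be finessed but the exact reason the step collapses, and hereditariness only shows that this preimage is hereditary, not that it is large enough for a Baire-category or Cantor-scheme argument to find anything in it.

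The actual proof of Kuratowski--Mycielski avoids this by constructing the whole chain $C_1\subset C_2\subset\cdots$ simultaneously through a single Cantor scheme: at stage $m$ one has finitely many disjoint open sets, splits each in two, inserts a fresh one inside $U_m$ to secure density, and uses density of $Q$ together with hereditariness (which reduces membership of a set in $\langle V_1,\dots,V_n\rangle\cap Q$ to membership of a finite set $\{x_1,\dots,x_n\}$ with $x_i\in V_i$, around which one then shrinks) to refine all stage-$m$ sets so that every closed set compatible with the scheme lies in the $m$-th dense open set witnessing residuality. No partial Cantor set is ever frozen before all later requirements have been arranged. To repair your write-up you should either carry out that simultaneous construction or do what the paper does and simply cite Akin's Theorem 5.10, of which this lemma is a restatement.
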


Let $(X,\calG|_\La)$ be a switched system and $Q$ be a subset of $X$. Given  $\varepsilon>0$, we say that a subset $A$ of $X$ is $\varepsilon$-spread in $Q$ if there exists $\delta\in(0,\varepsilon)$, $z_1,z_2,\ldots,z_n\in X$ such that $A\subset\cup_{i=1}^nB(z_i,\delta)$ and for any map $h:~\{z_1,z_2,\ldots,z_n\}\rightarrow Q$, there exists $k\in\N$ with $\frac{1}{k}<\varepsilon$ and $\w\in\calL^k(\La)$ such that $f_{\w}(B(z_i,\delta)\subset B(h(z_i),\varepsilon)$.

Denote by $\mathfrak{X}(\varepsilon,Q)$ the collection of all compact sets $\varepsilon$-spread in $Q$. Then $\mathfrak{X}(\varepsilon,Q)$ is hereditary. In fact, if $A$ is $\varepsilon$-spread in $Q$ and $B$ is a non-empty closed subset of $A$, then $B$ is also $\varepsilon$-spread in $Q$. Set
\[\mathfrak{X}(Q)=\bigcap_{p=1}^\infty\mathfrak{X}(\frac{1}{p},Q).\]

\begin{lem}\label{lem:SRS}
 Suppose $(X,\calG|_\La)$ is a switched system, $Q$ is a closed subset of $X$, and $K\subset X$. If $K$ is compact and weakly mixing of type $2$ with respect to $Q$, then $\mathfrak{X}(Q)\cap2^K$ is a residual subset of $2^K$.
\end{lem}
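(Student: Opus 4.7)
The plan is to show residuality of $\mathfrak{X}(Q)\cap 2^K=\bigcap_{p=1}^\infty\mathfrak{X}(\tfrac{1}{p},Q)\cap 2^K$ by proving each factor $\mathfrak{X}(\tfrac{1}{p},Q)\cap 2^K$ is open and dense in $2^K$, and then invoking the Baire category theorem on the complete metric space $2^K$. Openness is the routine half: if $A$ is $\tfrac{1}{p}$-spread in $Q$ with witnesses $(\delta,z_1,\ldots,z_n)$, then $A$ is compact and lies inside the open set $\bigcup_i B(z_i,\delta)$, so $\eta:=d(A,X\setminus\bigcup_i B(z_i,\delta))>0$ (pick $\eta$ arbitrary if that complement is empty). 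Any $A'\in 2^K$ with $d_H(A,A')<\eta$ is then still covered by $\bigcup_i B(z_i,\delta)$, and the identical witnesses $(\delta,z_1,\ldots,z_n)$ with the identical words $\omega_h$ exhibit $A'\in\mathfrak{X}(\tfrac{1}{p},Q)$.

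For density, fix a basic Vietoris open set $\langle U_1,\ldots,U_m\rangle\cap 2^K$ with $U_i\cap K\neq\emptyset$. Since $Q$ is closed in the compact space $X$ of this section, it is compact, so I fix a finite $\tfrac{1}{2p}$-net $\{q_1,\ldots,q_N\}\subset Q$ and enumerate the $M:=N^m$ maps $h_1,\ldots,h_M\colon\{1,\ldots,m\}\to\{q_1,\ldots,q_N\}$. I then build inductively, for $j=1,\ldots,M$, open sets $W_i^{(j)}$ and words $\omega_j\in\calL^{k_j}(\La)$ with $k_j>p$ so that, writing $W_i^{(0)}:=U_i$, one has $\overline{W_i^{(j)}}\subset W_i^{(j-1)}$, $W_i^{(j)}\cap K\neq\emptyset$, and $f_{\omega_j}(W_i^{(j)})\subset B(h_j(i),\tfrac{1}{2p})$ for every $i$. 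The inductive step applies weak mixing of type $2$ of order $m$ with respect to $Q$ to $W_1^{(j-1)},\ldots,W_m^{(j-1)}$ and $B(h_j(1),\tfrac{1}{2p}),\ldots,B(h_j(m),\tfrac{1}{2p})$, drawing $\omega_j$ of length $>p$ from the infinite sequence $\{s_i\}$ with $|l(s_i)|$ increasing supplied by Definition~\ref{defn:MD}; continuity of $f_{\omega_j}$ then yields the shrunken $W_i^{(j)}$.

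At the end of the induction, pick $x_i\in K\cap W_i^{(M)}$ and $\delta\in(0,\tfrac{1}{p})$ small enough that $B(x_i,\delta)\subset W_i^{(M)}$ for all $i$, and set $A:=\{x_1,\ldots,x_m\}$. Then $A\in\langle U_1,\ldots,U_m\rangle$ is automatic. To verify $A\in\mathfrak{X}(\tfrac{1}{p},Q)$ with witnesses $(\delta,x_1,\ldots,x_m)$: given any $h\colon A\to Q$, replace each value $h(x_i)$ by its nearest net point $q_{j(i)}$; the resulting map equals some $h_\ell$, and
\[
f_{\omega_\ell}(B(x_i,\delta))\subset f_{\omega_\ell}(W_i^{(\ell)})\subset B(h_\ell(i),\tfrac{1}{2p})\subset B(h(x_i),\tfrac{1}{p}),
\]
with $1/k_\ell<1/p$, which is exactly what $\tfrac{1}{p}$-spreadness demands.

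I expect the main obstacle to be the bookkeeping in this inductive construction: at each step $j$ one must preserve that $W_i^{(j)}\cap K$ remains nonempty (so that weak mixing is applicable on the next round) while simultaneously achieving the image-containment inside $B(h_j(i),\tfrac{1}{2p})$ and keeping $k_j>p$. The length constraint is precisely where the ``$|l(s_i)|$ increasing'' clause in the definition of weak mixing of type $2$ is indispensable—without it one could not freely enlarge $k_j$ as $j$ grows. The fact that previously arranged inclusions $f_{\omega_{j'}}(\,\cdot\,)\subset B(h_{j'}(i),\tfrac{1}{2p})$ survive restriction to $W_i^{(j)}$ for $j'<j$ is automatic from the nesting and requires no extra work.
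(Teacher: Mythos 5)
Your proposal is correct and follows essentially the same route as the paper: openness of each $\mathfrak{X}(\tfrac{1}{p},Q)\cap 2^K$ via the covering witnesses, density via a finite $\tfrac{1}{2p}$-net of the compact set $Q$, enumeration of all $N^m$ assignments, and a nested refinement of the $U_i$ using weak mixing of type $2$ at each stage, concluding by Baire category. Your explicit handling of the length constraint $k_j>p$ (drawn from the ``$|l(s_i)|$ increasing'' clause) is a small point the paper's write-up glosses over, but it is not a different argument.
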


\begin{proof}
  First, we show that $\mathfrak{X}(\frac{1}{p},Q)\cap 2^K$ is open in $2^K$. Let $A\in\mathfrak{X}(\frac{1}{p},Q)\cap 2^K$. Then there exists $\delta>0$ and $z_1,z_2,\cdots,z_n\in X$ satisfying the definition of $\frac{1}{p}$-spread in $Q$. Let $V_i=B(z_i,\delta)$, for $i=1,2,\cdots,n$. It is easy to verify that every $B\subset\cup_{i=1}^nV_i$ is also $\frac{1}{p}$-spread in $Q$. Specially, if $B\in\langle V_1\cap K,\ldots,V_n\cap K\rangle$, then $B\in \mathfrak {X}(\frac{1}{p},Q)$. So $\mathfrak {X}(\frac{1}{p},Q)\cap 2^K$ is open in $2^K$.

  Next, to prove $\mathfrak {X}(\frac{1}{p},Q)\cap2^K$ is dense in $2^K$, we shall show that for any non-empty open sets $U_1,U_2,\ldots,U_n$ of $X$ intersecting $K$, $\langle U_1\cap K,U_2\cap K,\ldots,U_n\cap K\rangle\cap\mathfrak {X}(\frac{1}{p},Q)\cap2^K\neq\emptyset$.
Since $Q$ is compact, there is a finite subset $\{y_1,y_2,\ldots,y_m\}$ of $Q$ such that $Q\subset\cup_{i=1}^mB(y_i,\frac{1}{2p})$. For convenience, denote $V_i=B(y_i,\frac{1}{2p})$ for $i=1,2,\ldots,m$ and $\mathcal{S}=\{\alpha_j=(a_1,a_2,\ldots,a_n):a_i\in\{1,2,\ldots,m\},j=1,2,\ldots,m^n\}$. Let $\alpha_j(i)$ denote the $i$-th component of $\alpha_j$. As $K$ is a weakly mixing set with respect to $Q$, there exists $k_1$ and $\w_1\in\calL^{k_1}(\La)$ such that
  \[f_{\w_1}(U_i\cap K)\cap V_{\alpha_1(i)}\neq\emptyset,~i=1,2,\ldots n.\]
  Choose a non-empty open subset $W_i^1$ of $U_i$ intersecting $K$ such that $f_{\w_1}(W_i^1)\subset V_{\alpha_1(i)}$. For $W_i^1$, there exists $k_2$ and $\w_2\in\calL^{k_2}(\La)$ such that for $i=1,2,\ldots n$
  \[f_{\w_2}(W_i^1\cap K)\cap V_{\alpha_2(i)}\neq\emptyset.\]
  Choose a non-empty open subset $W_i^2$ of $W_i^1$ intersecting $K$ such that $f_{\w_2}((W_i^2)\subset V_{\alpha_2(i)}$. Using this process all over the sequences of $\mathcal {S}$, we get $k_1,k_2,\ldots,k_{m^n}$ and $\w^j$, $1\leq j\leq m^n$ such that
  \[W_{i}^{m^n}\subset W_{i}^{m^n-1}\subset\cdots\subset W_{i}^1\subset U_i\]
  and
  \[f_{\w_j}(W_{i}^{k_{m^n}})\subset V_{\alpha_j(i)},j=1,2,\ldots,m^n.\]
  Pick $z_i\in W_{i}^{k_{m^n}}\cap E$. Then $\{z_1,z_2,\ldots,z_n\}\in\langle U_1\cap E,U_2\cap E,\ldots,U_n\cap E\rangle\cap 2^E$. Choose $\delta\in(0,\frac{1}{p})$ such that $B(z_i,\delta)\subset W_i^{k_{m^n}}$ for $i=1,2,\ldots,n$. For any map $h:\{z_1,z_2,\ldots,z_n\}\rightarrow Q$, there exists $\alpha_j$ such that $V_{\alpha_j(i)}\subset B(h(z_i),\frac{1}{p})$ for $i=1,2,\ldots,n$. Therefore,
  \[f_{\w_j}(B(z_i,\delta)\subset f_{\w_j}(W_i^{k_{m^n}})\subset V_{\alpha_j(i)}\subset B(h(z_i),\frac{1}{p}) \]
  for $i=1,2,\ldots,n$, which implies $\{z_1,z_2,\ldots,z_n\}$ is $\frac{1}{p}$-spread in $Q$ and $\mathfrak{X}(\frac{1}{p},Q)\cap2^K$ is dense in $2^K$. It follows that $\mathfrak{X}(Q)\cap 2^K$ is a residual subset of $2^K$.
\end{proof}




\begin{lem}\label{lem:SOSC}
  Let $(X,\calG|_\La)$ be a switched system, and let $Q$ be a closed subset of $X$. If $C_1\subset C_2\subset\cdots$ is an increasing unbounded sequence of sets in $\mathfrak{X}(Q)$, then for any subset $A$ of $C:=\cup_{i=1}^\infty C_i$ and continuous function $h:A\rightarrow Q$ there exists a sequence $\{k_i\}_{i=1}^\infty$ of positive integers and $\w_i\in\calL^{k_i}(\La)$ such that
  \[\lim_{i\rightarrow\infty}f_{\w_i}(x)=h(x)\]
  for every $x\in A$.
\end{lem}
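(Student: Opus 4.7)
The plan is to build the sequence $\{(k_i,\w_i)\}_{i \ge 1}$ stage by stage, invoking the $1/i$-spread property of $C_i$ at stage $i$, and then to verify the pointwise convergence $f_{\w_i}(x) \to h(x)$ by exploiting the fact that each $x \in A$ eventually lies in $C_i$ together with the continuity of $h$ at $x$.

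Concretely, at stage $i$ I would apply $C_i \in \mathfrak{X}(1/i,Q)$ to produce $\delta_i \in (0,1/i)$ and centers $z_1^{(i)},\dots,z_{n_i}^{(i)} \in X$ with $C_i \subset \bigcup_s B(z_s^{(i)},\delta_i)$ enjoying the universal mapping property of the spread definition. For each index $s$ with $A \cap B(z_s^{(i)},\delta_i) \neq \emptyset$, I would pick a witness $a_s^{(i)}$ in this intersection and define the auxiliary test map $h_i(z_s^{(i)}) := h(a_s^{(i)}) \in Q$; for the remaining centers I would set $h_i(z_s^{(i)})$ to be any fixed point of $Q$ (nonempty by hypothesis). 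The $1/i$-spread property then delivers $k_i$ with $1/k_i < 1/i$ and $\w_i \in \calL^{k_i}(\La)$ such that $f_{\w_i}(B(z_s^{(i)},\delta_i)) \subset B(h_i(z_s^{(i)}),1/i)$ for every $s$. If one wants $\{k_i\}$ strictly increasing, this can be arranged by shrinking the spread parameter at stage $i$ so that any admissible $k_i$ must exceed $k_{i-1}$, or by passing to a subsequence at the end.

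To verify the limit, I would fix $x \in A$. Since $A \subset \bigcup_j C_j$ and the $C_j$ are nested, there exists $j_0$ with $x \in C_i$ for every $i \ge j_0$. For such $i$ there is an index $s = s(i)$ with $x \in B(z_s^{(i)},\delta_i)$; the intersection $A \cap B(z_s^{(i)},\delta_i)$ is then nonempty because $x$ itself lies in it, so $a_s^{(i)}$ was actually defined and belongs to $A$. By construction $d(f_{\w_i}(x),h(a_s^{(i)})) < 1/i$ and $d(a_s^{(i)},x) \le d(a_s^{(i)},z_s^{(i)}) + d(z_s^{(i)},x) < 2\delta_i < 2/i$. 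Continuity of $h$ at $x$ in the subspace topology of $A$ then forces $h(a_s^{(i)}) \to h(x)$, and the triangle inequality gives $f_{\w_i}(x) \to h(x)$.

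The main delicate point is that only pointwise, not uniform, continuity of $h$ is available: since $A$ can be an arbitrary (possibly noncompact) subset of $C$, a uniform modulus is not at hand. The argument sidesteps this by choosing the approximating point $a_s^{(i)}$ from $A$ itself, where $h$ is defined; pointwise continuity of $h$ at the specific $x$ then does all the work. Beyond this, the only bookkeeping is to align the spread parameters across stages with the desired monotonicity of $\{k_i\}$, which is routine.
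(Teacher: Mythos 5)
Your proof is correct and follows essentially the same route as the paper: at stage $i$ invoke the $\frac{1}{i}$-spread property of $C_i$ to produce $\delta_i$, the covering centers, and the pair $(k_i,\w_i)$, then use the fact that each $x\in A$ eventually lies in every $C_i$ together with pointwise continuity of $h$ at $x$ to conclude $f_{\w_i}(x)\to h(x)$. Your version is in fact slightly more careful than the paper's, which evaluates $h$ directly at the centers $z_m^i$ even though these need not belong to $A$ (where $h$ is defined); your auxiliary map $h_i$ built from witness points $a_s^{(i)}\in A\cap B(z_s^{(i)},\delta_i)$ cleanly repairs that point.
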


\begin{proof}
  For $i\in\N$, $C_i$ is $\frac{1}{i}$-spread in $Q$, so there exist $\delta_i<\frac{1}{i}$ and $z_1^i,z_2^i,\cdots,z_{n_i}^i$ such that for any map $h:C_i\rightarrow Y$ there exists $k_i$ and $\w_i\in\calL^{k_i}(\La)$ satisfying $C_i\subset\cup_{m=1}^{n_i}B(z_m^i,\delta_i)$ and $f_{\w_i}(B(z_m^i))\subset B(h(z_m^i),\frac{1}{i})$ for $m=1,2,\cdots,n_i$. We shall show that $\{k_i\}_{i=1}^\infty$ and $\{\w_i\}_{i=1}^\infty$ are required.

  For any $x\in A$, there exists $l_x$ such that $x\in C_i$ for all $i>l_x$. By  continuity of $h$, for every $\varepsilon>0$ there is $l_\varepsilon>\frac{2}{\varepsilon}$ such that if $d(x,y)<\frac{1}{l_\varepsilon}<\frac{\varepsilon}{2}$, then
  \[d(h(x),h(y))<\frac{\varepsilon}{2}.\]
  For every $i>\max\{l_x,l_\varepsilon\}$, there exists $\delta_i<\frac{1}{i}<\frac{1}{l_\varepsilon}<\frac{\varepsilon}{2}$, $z_1^i,z_2^i,\cdots,z_{n_i}^i$ such that
  \[f_{\w_i}((B(z_m^i,\delta_i))\subset B(h(z_m^i),\frac{1}{i}),\forall m=1,2,\cdots,n_i,\]
  and there exists $z_m^i$ such that $x\in B(z_m^i,\delta_i)$. Then
  \[d(f_{\w_i}(x),h(x))<d(f_{\w_i}(x),h(z_m^i))+d(h(z_m^i),h(x))<\varepsilon,\]
  which means that
  \[\lim_{i\rightarrow\infty}f_{\w_i}(x)=h(x).\]
\end{proof}

\begin{exa}
Let $X=\R$, $f_0=2x$, $f_1=2-2x$, and $Q=[0,1]$. Then $Q$ is weakly mixing of type $1$ with respect to itself for switched system $(X,\calG)$, where $\calG=\{f_0,f_1\}$.
\end{exa}

\begin{proof}
It is clear that the the following tent map:
\[f(x)=\left\{
         \begin{array}{ll}
           2x, & x\in[0,\frac{1}{2}]; \\
           2-2x, & x\in(\frac{1}{2},1].
         \end{array}
       \right.
\]
is weakly mixing (see Example 3.1.3 in~\cite{Kolyada1997Some}). So for any nonempty open subsets $U_1,U_2,\ldots,U_n$ and $V_1,V_2,\ldots,V_n$ of $Q$, there exists $m\in\N$ such that
\[f^m(U_i)\cap V_i\neq\emptyset.\]
We shall construct $n$ elements $\w_i\in\{0,1\}^m$ such that
\[f_{\w_i}(U_i)\cap V_i\neq\emptyset.\]
Let $x_i\in U_i\cap f^{-n}(V_i)$. If $x_i\in U_i\cap[0,\frac{1}{2}]$ then we replace $f$ by $f_0$; otherwise, replace $f$ by $f_1$. So for every $x_i$ there exists $w_i\in\{0,1\}$ such that $f_{w_i}(x)\in Q$. Continuing the process, we can get the desired result.
\end{proof}


\textbf{Acknowledgements}  This work was supported by National Natural Science Foundation of China (Grant No. 11771459, 11471125) and International Program for Ph.D. Candidates, Sun Yat-Sen University. The second author would like to thank Prof. X. Zou and his department for their hospitality during his visit to Western University. A part of this work was done when the second author was visiting Western University.


\begin{thebibliography}{99}
\baselineskip 11.5pt

\bibitem{Akin2004Lectures}
E.~Akin.
\newblock Lectures on cantor and mycielski sets for dynamical systems.
\newblock {\em Chapel Hill Ergodic Theory Workshops}, pages 21--79, 2004.

\bibitem{Bahabadi2015Shadowing}
A.~Z. Bahabadi.
\newblock Shadowing and average shadowing properties for iterated function
  systems.
\newblock {\em Georgian Mathematical Journal}, 22(2):179--184, 2015.

\bibitem{balibrea2012weak}
F.~Balibrea and P.~Oprocha.
\newblock Weak mixing and chaos in nonautonomous discrete systems.
\newblock {\em Applied Mathematics Letters}, 25(8):1135--1141, 2012.

\bibitem{Bis2004}
A.~Bi\'{s}.
\newblock Entropies of a semigroup of maps.
\newblock {\em Discrete and Continuous Dynamical Systems}, 11(2-3):639--648,
  2004.

\bibitem{Blanchard2017Entropy}
F.~Blanchard and W.~Huang.
\newblock Entropy sets, weakly mixing sets and entropy capacity.
\newblock {\em Discrete \verb'&' Continuous Dynamical Systems - Series A
  (DCDS-A)}, 20(2):275--311, 2012.

\bibitem{Bufetov1999}
A.~Bufetov.
\newblock Topological entropy of free semigroup actions and skew-product
  transformations.
\newblock {\em Journal of Dynamical and Control Systems}, 5(1):137--143, 1999.

\bibitem{cairns2007topological}
G.~Cairns, A.~Kolganova, A.~Nielsen, et~al.
\newblock Topological transitivity and mixing notions for group actions.
\newblock {\em Rocky Mountain Journal of Mathematics}, 37(2):371--397, 2007.

\bibitem{chen2014multi}
Z.~Chen, J.~Li, and J.~L{\"u}.
\newblock On multi-transitivity with respect to a vector.
\newblock {\em Science China Mathematics}, 57(8):1639--1648, 2014.

\bibitem{chen2015point}
Z.~Chen, J.~Li, and J.~L{\"u}.
\newblock Point transitivity, {$\Delta$}-transitivity and multi-minimality.
\newblock {\em Ergodic Theory and Dynamical Systems}, 35(5):1423--1442, 2015.

\bibitem{Costa-Fragoso-Marques}
O.~L.~V. Costa, M.~D. Fragoso, and R.~P. Marques.
\newblock Discrete-time markov jump linear systems.
\newblock {\em Automatic Control IEEE Transactions on}, 51(5):916--917, 2006.

\bibitem{huang2017positive}
W.~Huang, J.~Li, X.~Ye, and X.~Zhou.
\newblock Positive topological entropy and {$\Delta$}-weakly mixing sets.
\newblock {\em Advances in Mathematics}, 306:653--683, 2017.

\bibitem{Ma2017Some}
H.~Hui and D.~Ma.
\newblock Some dynamical properties for free semigroup actions.
\newblock {\em Stochastics and Dynamics}, 2017.

\bibitem{Kolyada1997Some}
S.~Kolyada and L.~Snoha.
\newblock Some aspects of topological transitivity - a survey.
\newblock {\em Grazer Math Ber}, 334:3--35, 1997.

\bibitem{kontorovich2008note}
E.~Kontorovich and M.~Megrelishvili.
\newblock A note on sensitivity of semigroup actions.
\newblock In {\em Semigroup Forum}, volume~76, pages 133--141. Springer, 2008.

\bibitem{Li2014Localization}
J.~Li.
\newblock Localization of mixing property via furstenberg families.
\newblock {\em Discrete \verb'&' Continuous Dynamical Systems}, 35(2):725--740,
  2014.

\bibitem{li2015recurrence}
J.~Li, P.~Oprocha, and G.~Zhang.
\newblock On recurrence over subsets and weak mixing.
\newblock {\em Pacific Journal of Mathematics}, 277(2):399--424, 2015.

\bibitem{Li1975Period}
T.~Y. Li and J.~A. Yorke.
\newblock Period three implies chaos.
\newblock {\em American Mathematical Monthly}, 82(82):985, 1975.

\bibitem{moothathu2010diagonal}
T.~S. Moothathu.
\newblock Diagonal points having dense orbit.
\newblock In {\em Colloquium Mathematicum}, volume 120, pages 127--138.
  Instytut Matematyczny Polskiej Akademii Nauk, 2010.

\bibitem{Nadler1978Hyperspaces}
J.~Nadler, Sam~B.
\newblock Hyperspaces of sets.
\newblock {\em Electronic Journal of Symbolic Data Analysis}, 1978.

\bibitem{Oprocha2013Coherent}
P.~Oprocha.
\newblock Coherent lists and chaotic sets.
\newblock {\em Discrete \verb'&' Continuous Dynamical Systems}, 31(3):797--825,
  2013.

\bibitem{Oprocha2011On}
P.~Oprocha and G.~Zhang.
\newblock On local aspects of topological weak mixing in dimension one and
  beyond.
\newblock {\em Studia Mathematica}, 202(202 (2011)):261--288, 2011.

\bibitem{oprocha2012sets}
P.~Oprocha and G.~Zhang.
\newblock On sets with recurrence properties, their topological structure and
  entropy.
\newblock {\em Topology and its Applications}, 159(7):1767--1777, 2012.

\bibitem{oprocha2013weak}
P.~Oprocha and G.~Zhang.
\newblock On weak product recurrence and synchronization of return times.
\newblock {\em Advances in Mathematics}, 244:395--412, 2013.

\bibitem{polo2010sensitive}
F.~Polo.
\newblock Sensitive dependence on initial conditions and chaotic group actions.
\newblock {\em Proceedings of the American Mathematical Society},
  138(8):2815--2826, 2010.

\bibitem{Rodrigues2016Specification}
F.~B. Rodrigues and P.~Varandas.
\newblock Specification and thermodynamical properties of semigroup actions.
\newblock {\em Journal of Mathematical Physics}, 57(5):263--267, 2016.

\bibitem{Sun-Ge2005}
Z.~Sun and S.~S. Ge.
\newblock {\em Switched Linear Systems: Control and Design}.
\newblock Springer-Verlag, 2006.

\bibitem{wang2015m}
H.~Wang, Z.~Chen, and H.~Fu.
\newblock M-systems and scattering systems of semigroup actions.
\newblock In {\em Semigroup Forum}, volume~91, pages 699--717. Springer, 2015.

\bibitem{wang2014almost}
J.~Wang.
\newblock Almost strong mixing group actions in topological dynamics.
\newblock {\em arXiv preprint arXiv:1405.5971}, 2014.

\bibitem{wang2017weakly}
L.~Wang, J.~Liang, and Z.~Chu.
\newblock Weakly mixing property and chaos.
\newblock {\em Archiv der Mathematik}, pages 1--7, 2017.

\bibitem{Wang-Ma2015}
Y.~Wang and D.~Ma.
\newblock On the topological entropy of a semigroup of continuous maps.
\newblock {\em Journal of Mathematical Analysis and Applications},
  427(2):1084--1100, 2015.

\bibitem{Wang-Ma-Lin2016}
Y.~Wang, D.~Ma, and X.~Lin.
\newblock On the topological entropy of free semigroup actions.
\newblock {\em Journal of Mathematical Analysis and Applications},
  435(2):1573--1590, 2016.

\bibitem{Xiong1991Chaos}
J.~Xiong and Z.~Yang.
\newblock Chaos caused by a topological mixing map.
\newblock In K.~Shiraiwa, editor, {\em Advance Series in Dynamical Systems},
  volume~9, 1990.

\bibitem{zeng2017multi}
T.~Zeng.
\newblock Multi-transitivity and {$\Delta$}-transitivity for semigroup actions.
\newblock {\em Topology and its Applications}, 226:1--15, 2017.


\end{thebibliography}
\end{document}